\documentclass[12pt,a4paper]{amsart}

\usepackage{amssymb}
\usepackage[final]{showkeys}
\usepackage{microtype}
\usepackage{color}
\usepackage{graphicx}
\usepackage[hmargin=3cm,vmargin={5cm,4cm}]{geometry}

\newtheorem{te}{Theorem}

\newtheorem{os}{Remark}
\newtheorem{prop}{Proposition}

\numberwithin{equation}{section}

\allowdisplaybreaks



\def\eg{{\it e.g.}\ }

\def\e{\hbox{e}}

\def\ds{\displaystyle}
\def\RR{\mathbb{R}}

\def\e{{\rm e}}

\def\ds{\displaystyle}

\def\rec#1{\frac{1}{#1}}


\def\RR{\vbox {\hbox to 8.9pt {I\hskip-2.1pt R\hfil}}}


\begin{document}

\title[Some applications of Wright functions ]{Some applications of Wright functions in fractional differential equations.}
\author{R.Garra$^1$}
\address{$^1$ Dipartimento di Scienze Statistiche, Sapienza University of Rome\\
P. le Aldo Moro, 5 – 00185, Rome, Italy\\
E-Mail Address: roberto.garra@uniroma1.it}
\author{F. Mainardi $^2$}
\address{$^2$ Dipartimento di Fisica e Astronomia (DIFA), University of Bologna
"Alma Mater Studiorum", and INFN,\\ Via Irnerio 46, I-40126 Bologna,
Italy\\
E-mail address: francesco.mainardi@bo.infn.it 
 }

\begin{abstract}
In this note we prove some new results about the application of Wright functions
of the first kind  to solve 
fractional differential equations with variable coefficients.
Then, we consider some applications of these results in order to obtain some new particular solutions for nonlinear fractional 
partial differential equations.

\medskip
   	
           \noindent \emph{Keywords}: Wright functions, linear and nonlinear fractional equations
   
   			\noindent \emph{MSC 2010}: 34A08
\end{abstract}
 
\maketitle
\centerline{ {\bf Reports on Mathematical Physics, Vol. 87 No 2 (2021), pp 265--273}}
\centerline{{\bf DOI:10.1016/S0034-4877(21)00029-X}}

\section{Introduction}

The problem to find explicit solutions for fractional ordinary differential equations with 
variable coefficients is a topic of interest, also in the context of the studies about 
special functions. For example the so called Saigo-Kilbas function emerges in the study of
fractional ODE with variable coefficient (see \cite{oli} and the references therein).
Recently different authors are working on systematic methods to solve fractional ODEs with variable coefficients, we refer in particular to the recent paper \cite{ang}.
In other cases, it is possible to prove that some interesting classes of special functions 
solve fractional equations with variable coefficients. For example, in \cite{fede} the authors 
proved that a generalized Le Roy function solves a particular integro-differential equation with variable coefficients involving Hadamard fractional operators.\\
In the recent paper \cite{fil}, a new interesting result has been pointed out about the solution of a new class of fractional ODEs by means of the classical Wright functions of the first kind.

In particular, it was proved, by direct calculations, that the following
fractional ODE
\begin{equation}
\frac{d}{dt}t^\lambda\frac{d^\lambda u}{dt^\lambda}-\frac{\lambda u}{t^{1-\lambda}} = 0, \quad t\geq 0, \lambda \in (0,1),
\label{eq:1}
\end{equation}
involving the Caputo fractional derivative of order $\lambda$ is solved by the function
\begin{equation}
u(t) = W_{\lambda, 1}(t^\lambda) = \sum_{k=0}^\infty \frac{t^{\lambda k}}{k! \Gamma(k\lambda +1)}, \quad \lambda \in(0,1),
\label{eq:W1}
\end{equation}
under the initial condition that $u(t = 0) = 1$.
We note that  $W_{\lambda,1}$ is a particular case of 
 special transcendental functions known as  Wright functions that we will briefly discuss in the next section, distinguishing them in two kinds according to the values of the first parameter $\lambda$.   
We recall that the fractional derivative in the sense of Caputo of order $\nu>0$ is defined as
\begin{equation}
\left(\frac{d^\nu}{dt^\nu}f\right)(t):= 
\begin{cases}
&\displaystyle\frac{1}{\Gamma(m-\nu)}\int_0^t \frac{f^{(m)}(\tau)}{(t-\tau)^{\nu+1-m}}d\tau, \quad \mbox{for $m-1<\nu< m$},\\
& \displaystyle\frac{d^m}{dt^m} f(t), \quad \mbox{for $\nu = m$}.
\end{cases}
\end{equation}

Therefore, Eq. (\ref{eq:1})  can be viewed as a sort of fractional generalization of the Bessel-type differential equations.
Indeed, for $\lambda = 1$ we obtain the following equation
\begin{equation}\label{lag}
\frac{d}{dt}t \frac{d u}{dt} = u,
\end{equation}
whose solution is the  so-called Tricomi function (see \cite{datt}, \cite{dat1} and the references therein)
\begin{equation}
C_0(t) = \sum_{k=0}^\infty\frac{t^k}{k!^2}.
\end{equation}
that turns out to be related to modified  Bessel function of the first kind and order zero  $I_0$ by
\begin{equation}
C_0(t) = I_0(2t^{1/2})\ .
\end{equation}

In the next section we will also discuss in some detail the relations between the Wright functions of the first kind with functions of the Bessel type.
  
The operator $\displaystyle\frac{d}{dt}t\frac{d}{dt}$ appearing in \eqref{lag} is also named Laguerre derivative in the literature.  
Laguerre derivatives have been recently studied by different authors in the framework 
of the so called monomiality principle pointed out for example in \cite{dat1}.
Applications of Laguerre derivatives in population dynamics have been considered in \cite{bretti}.
More recently, in \cite{zhu}, mathematical models of heat propagation based on Laguerre 
derivatives in space have been studied. 

The aim of this short note is twofold. First of all we provide a more general result connecting Wright functions of the first kind with fractional ODE with variable coefficients. We underline the role of these special functions in the theory of fractional differential equations.
Then, we discuss some simple applications of these results to solve nonlinear
fractional PDEs admitting solutions by generalized separating variable 
solution.

\section{Preliminaries about Wright functions}

The classical \emph{Wright function} that we denote by $W_{\lambda , \mu}(z)$, is defined by the series representation convergent in the whole complex plane,
\begin{equation}
W_{\lambda , \mu}(z) :=\sum_{n=0}^{\infty}{\frac{z^n}{n!\Gamma (\lambda n + \mu)}}, ~~~ \lambda > -1, ~~~ \mu \in \mathbb{C},
\end{equation}

The \emph{integral representation} reads as: 
\begin{equation}
W_{\lambda , \mu}(z) = \frac{1}{2\pi i}\int_{Ha_{-} }{e^{\sigma + z\sigma ^{-\lambda}}\frac{d\sigma}{\sigma ^{\mu}}}, ~~~ \lambda > -1, ~~~ \mu \in \mathbb{C},
\end{equation}
where $Ha_{-}$ denotes the Hankel path: this one is a loop which starts from $-\infty$ along the lower side of negative real axis, encircling it with a small circle the axes origin and ends at $-\infty$ along the upper side of the negative real axis.
 
$W_{\lambda , \mu}(z)$ is then an \emph{entire function} for all
$\lambda \in (-1, +\infty)$.
 Originally, Wright assumed $\lambda \ge 0$ in connection with his investigations on the asymptotic theory of partition
 \cite{Wright 1935} and only in 1940 he considered
  $-1 < \lambda < 0$, \cite{Wright 1940}.

In view of the asymptotic representation in the complex domain 
and of the Laplace transform for positive argument  $z=r>0$
($r$ can be the time variable $t$ or the space variable $x$),
  the Wright functions are distinguished in
   \emph{first kind} ($\lambda \geq 0$) and \emph{second kind}
 ($-1< \lambda < 0$) 
 as outlined in the Appendix F of the book by Mainardi
 \cite{Mainardi BOOK2010}, see also the recent survey article 
 \cite{Mainardi-Consiglio MATHEMATICS2020}.
 In particular, for the asymptotic behavior, we refer the interested reader
to the surveys by Luchko and by Paris in the Handbook of Fractional Calculus
  and Applications,
   see, respectively, \cite{Luchko HFCA,Paris HFCA},
    and references therein. 
 
We note that the Wright functions are an entire of order $1/(1+\lambda)$;
 hence, only  the first kind functions ($\lambda \ge 0$) are of exponential order, whereas  the second kind functions ($-1<\lambda<0$)  
 are not of exponential order.
 The case $\lambda =0$ is trivial since
 $W_{0,\mu}(z) = {\e^z}/{\Gamma(\mu)}.$
As a consequence of the difference in the orders, we must point out the different Laplace transforms proved 
e.g., in  \cite{GOLUMA 99,Mainardi BOOK2010}, see also the 
recent survey
on Wright functions by Luchko \cite{Luchko HFCA}. 
We have:
 \begin{itemize}
\item for the first kind, when $\lambda \ge 0$
\begin{equation}
  W_{\lambda ,\mu } (\pm r) \,\div \,
    \rec{s}\, E_{\lambda ,\mu }\left(\pm \rec{s}\right) \,;
\end{equation}
 
\item for the second kind,   when $-1<\lambda<0$
and putting for convenience  $\nu = -\lambda$
so $0< \nu<1$
\begin{equation}
W_{-\nu  ,\mu } (-r) \,\div \,
    E_{\nu ,\mu+\nu }\left(-s \right) \,,
\end{equation}
 \end{itemize}
whre $E_{\lambda, \mu}(z)$ denotes the Mittag-Leffler function, see for details
\cite{GKMS BOOK2014}.

In the present  paper we need to restrict our attention to the Wright function of the first kind  in order to point out their  relations with functions of the Bessel type.
Indeed, it is easy to recognize 
that the Wright functions of the first kind 
turn out to be related to the well-known
Bessel functions $ J_\nu  $  and $I_\nu$  for $\lambda =1$ and $\mu = \nu+1$. 
In fact, by using the well known series definitions  for the Bessel functions
 and the series definitions  for the Wright functions,  we get the  identities:
\begin{equation}
\begin{array}{ll}
J_{\nu}(z) \!:= \!   {\ds \left(\frac{z}{2}\right )^\nu \,  \sum_{n=0}^{\infty}
\frac{(-1)^n (z/2)^{2n}}{  n!\,  \Gamma( n + +\nu +1)}}  
 \!=\! {\ds \left(\frac{z} {2}\right )^\nu \,
      W_{1, \nu + 1}\left(- \frac{z^2}{4} \right)} ,\\
	W_{1, \nu + 1}\left(-z \right)   
     :=  {\ds  \sum_{n=0}^{\infty} \frac{(-1)^n z^n}{  n!\,  \Gamma( n + \nu +1)}}
	 \!= \!{\ds z^{-\nu /2} \, J_\nu (2 {z}^{1/2})}.  
	 \end{array}
	 \label{eq:F4}
\end{equation}
and 
\begin{equation}
\begin{array}{ll}
   I_{\nu}(z) \!:=\! 
   {\ds \left(\frac{z}{2}\right )^\nu \,  \sum_{n=0}^{\infty}
\frac{(z/2)^{2n}}{  n!\,  \Gamma( n + +\nu +1)}}  
\!=\! {\ds \left(\frac{z}{2}\right )^\nu \,
      W_{1, \nu + 1}\left(\frac{z^2} {4} \right)}\,,\\
   W_{1, \nu + 1}\left(z \right)    
  : =   {\ds  \sum_{n=0}^{\infty}
\frac{z^n}{  n!\,  \Gamma( n + \nu +1)}}
\!=\!  {\ds z^{-\nu /2} \, I_\nu (2 {z}^{1/2})} \,. 
\end{array} 
\label{eq:F5}
\end{equation}
As far as the standard Bessel functions $J_\nu$ are concerned, the following observations 
are worth  noting.
We first note that the Wright functions $W_{1, \nu+1}(-z)$
are related  to the  entire functions ${\mathcal{J}}^C_\nu(z)$
known as {\it Bessel-Clifford  functions} herewith defined 
\begin{equation}
 J_\nu^C  (z) := z^{-\nu /2} \, J_\nu (2 {z}^{1/2}) =
    \sum_{k=0}^{\infty} \frac{(-1)^k z^k}{  k!\,  \Gamma( k + \nu +1)}.
\end{equation}

We note that  different  variants of the Bessel functions (that is  without the singular factor)  were adopted  independently by Tricomi to get  entire functions 
as in the case of Bessel-Clifford  in his treatise on Special Functions published in the late 1950's \cite{Tricomi  BOOK1959}, later revisited and enlarged by Gatteschi \cite{Gatteschi BOOK1973}, 
\begin{equation}
 J_\nu^T(z):=  (z/2)^{-\nu} J_\nu(z)= 
 \sum_{k=0}^\infty \frac{(-1)^k}{k!\Gamma(k +\nu +1)}\, \left(\frac{z}{2}\right)^{2k}\,.
\end{equation}
Then, in view of the first equation in  (\ref{eq:F4}),
some authors refer to the Wright function (of the first kind)  as
the  {\it Wright  generalized  Bessel function}
(misnamed also as the {\it Bessel-Maitland function})
and introduce the notation for $\lambda \ge 0$, see \eg 
\cite{Kiryakova BOOK1994}, p. 336, and \cite{paneva}
\begin{equation}
J_\nu ^{(\lambda)} (z) \!:=\! \left(\frac{z}{2}\right )^\nu 
    \sum_{n=0}^{\infty}
\frac{(-1)^n (z/2)^{2n}}{  n!  \Gamma(\lambda  n + \nu +1)}\! = \!
\left(\frac{z}{  2 }\right )^\nu W_{\lambda, \nu + 1}\left(- \frac{z^2}{4} \right).
 \label{eq:F6}
 \end{equation}
 Similar remarks can be extended to the modified Bessel functions $I_\nu$.
 Even if for Bessel functions the parameter $\nu$ can take aribitrary real and/or complex values, from now on we restrict our analysis to $\nu \ge 0$.
 

\section{Fractional ordinary differential equations with variable coefficients}

We here prove a new connection between Wright functions of first kind and fractional ODE.

\begin{te}
The fractional equation
\begin{equation}
\frac{d^\beta}{dt^\beta}\left(t^{\nu}\frac{df}{dt}\right)= \beta t^{\nu-1} f(t),
\end{equation}
involving a fractional derivative in the sense of Caputo of order $\beta \in (0,1)$, admits a solution of the form
\begin{equation}
f(t) = W_{\beta,\nu}(t^\beta).
\end{equation}
\end{te}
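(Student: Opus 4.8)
The plan is to verify the identity directly, working term by term on the power series $f(t)=W_{\beta,\nu}(t^\beta)=\sum_{n=0}^{\infty}\frac{t^{\beta n}}{n!\,\Gamma(\beta n+\nu)}$. The only analytic fact I need is the Caputo power rule for $0<\beta<1$, namely $\frac{d^\beta}{dt^\beta}t^{\gamma}=\frac{\Gamma(\gamma+1)}{\Gamma(\gamma+1-\beta)}\,t^{\gamma-\beta}$ for $\gamma>0$, together with the fact that the Caputo derivative annihilates constants. Because the Wright function of the first kind is entire of order $1/(1+\beta)<1$ (recalled in Section 2), the series for $f$ together with those obtained from it by termwise differentiation and termwise fractional integration converge locally uniformly on $(0,\infty)$; this, as is standard for such power-type series, licenses the termwise manipulations below.

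First I would differentiate. The $n=0$ term is constant and disappears, and writing $\frac{\beta n}{n!}=\frac{\beta}{(n-1)!}$ gives $\frac{df}{dt}=\sum_{n=1}^{\infty}\frac{\beta\,t^{\beta n-1}}{(n-1)!\,\Gamma(\beta n+\nu)}$, so that $t^{\nu}\frac{df}{dt}=\sum_{n=1}^{\infty}\frac{\beta\,t^{\beta n+\nu-1}}{(n-1)!\,\Gamma(\beta n+\nu)}$. Next I would apply the Caputo derivative of order $\beta$ to each monomial $t^{\beta n+\nu-1}$. The power rule produces $\frac{\Gamma(\beta n+\nu)}{\Gamma(\beta(n-1)+\nu)}\,t^{\beta(n-1)+\nu-1}$, and the mechanism of the proof is that the numerator $\Gamma(\beta n+\nu)$ cancels exactly against the $\Gamma(\beta n+\nu)$ already sitting in the denominator of the coefficient. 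What remains is $\sum_{n=1}^{\infty}\frac{\beta}{(n-1)!\,\Gamma(\beta(n-1)+\nu)}\,t^{\beta(n-1)+\nu-1}$, and the substitution $m=n-1$ collapses this to $\beta\,t^{\nu-1}\sum_{m=0}^{\infty}\frac{t^{\beta m}}{m!\,\Gamma(\beta m+\nu)}=\beta\,t^{\nu-1}W_{\beta,\nu}(t^\beta)=\beta\,t^{\nu-1}f(t)$, which is precisely the right-hand side.

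The computation is mechanical once one sees the Gamma-function cancellation; the care required lies in the bookkeeping rather than in any deep estimate. I expect the main (if modest) obstacle to be the justification that the power rule applies to every surviving monomial: it requires the running exponent $\beta n+\nu-1$ to be positive for all $n\ge1$, which is automatic when $\nu\ge1$ (then $\beta n+\nu-1\ge\beta n>0$) and needs $\beta+\nu>1$ for the lowest term when $0<\nu<1$. It is also essential that the $n=0$ term drop out, for if a constant term survived into $t^{\nu}\frac{df}{dt}$ its Caputo derivative would be zero rather than the value predicted by the power rule, breaking the cancellation; fortunately this is automatic, since that term is constant and is killed by the ordinary derivative. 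Assembling these points gives the result.
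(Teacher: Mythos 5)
Your proof is correct and follows essentially the same route as the paper's: termwise differentiation of the series for $W_{\beta,\nu}(t^\beta)$, multiplication by $t^{\nu}$, and application of the Caputo power rule $\frac{d^\beta}{dt^\beta}t^{s}=\frac{\Gamma(s+1)}{\Gamma(s+1-\beta)}\,t^{s-\beta}$, with the same Gamma cancellation and index shift producing $\beta\,t^{\nu-1}W_{\beta,\nu}(t^\beta)$. Your added care --- the convergence justification and the observation that the lowest surviving exponent $\beta+\nu-1$ must be positive, so that the case $0<\nu<1$ implicitly requires $\beta+\nu>1$ --- is a genuine refinement of a point the paper passes over by merely stipulating $s>0$, but it does not change the approach.
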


\begin{proof}
By direct calculations we have that
\begin{align}
\nonumber\frac{d^\beta}{dt^\beta}\left(t^{\nu}\frac{d}{dt}\right)W_{\beta,\nu}(t^\beta)& = \beta \frac{d^\beta}{dt^\beta}\sum_{k=1}^\infty \frac{t^{k\beta+\nu-1}}{(k-1)!\Gamma(k\beta+\nu)}\\
\nonumber & =\beta \sum_{k=1}^\infty \frac{t^{k\beta+\nu-1-\beta}}{(k-1)!\Gamma(k\beta+\nu-\beta)} = \beta t^{\nu-1}  W_{\beta,\nu}(t^\beta).
\end{align}
where we used the fact that
\begin{equation}
\frac{d^\beta}{dt^\beta}t^s = \frac{\Gamma(s+1)}{\Gamma(s+1-\beta)}t^{s-\beta},
\end{equation}
where $s>0$.
\end{proof}

This result is clearly more general than the one proved in \cite{fil}, but also in this case we have a direct connection with Bessel-type equations involving Laguerre derivatives. 
Indeed, for $\nu = \beta = 1$ we obtain again the equation
\begin{equation}
\frac{d}{dt}t \frac{d u}{dt} = u,
\end{equation}
studied in \cite{dat1}. See also \cite{zhu} for the applications of Laguerre
derivatives in mathematical models for physics.\\

In a similar way, we can prove that the function
\begin{equation}
u(t) = \sum_{k=0}^\infty  \frac{t^{k\beta}}{k!^2\Gamma(k\beta+\nu)},
\end{equation}
solves the equation
\begin{equation}
\left(\frac{d^\beta}{dt^\beta}t^{\nu}\frac{d}{dt}t\frac{d}{dt}\right)f(t)= \beta t^{\nu-1} f(t).
\end{equation}
This can be simply generalized to higher order equations. 

\section{New results about nonlinear fractional diffusion equations}

A simple but useful method for constructing exact solutions for nonlinear PDEs
is given by the generalized separation of variables. This method permits to find 
particular classes of exact solutions mainly based on the reduction to nonlinear ODEs that
can be exactly solved. It is possible to prove that wide classes of nonlinear PDEs admit
such solutions in separating variable form by using for example the invariant subspace method (see the relevant monograph \cite{gala}). Many papers have been devoted to show the utility of this method in order to obtain exact results for nonlinear diffusive equations, we refer for example to \cite{pol} and \cite{pol1} and the references therein.\\
In the recent literature the construction of exact solutions for nonlinear fractional PDEs based on this method have gained some interest, see for example \cite{saha}. Indeed, there are few
exact results for nonlinear PDEs involving space or time-fractional derivatives. On the other hand, the applications of Lie 
group methods play a central role in this framework for a complete mathematical analysis. We refer in particular 
to the review chapters \cite{kasa} and \cite{kasa1} published on the recent \textit{Handbook of Fractional Calculus with Applications}.\\

We here prove some new interesting results that can be obtained as a direct consequence of Theorem 3.1.

\begin{prop}
The nonlinear fractional equation 
\begin{equation}
t^{1-\nu}\frac{\partial^\beta}{\partial t^\beta}t^{\nu}\frac{\partial u}{\partial t}+u\frac{\partial u}{\partial x}= -u^2-\beta u,\quad \beta \in(0,1),
\end{equation}
admits the solution 
\begin{equation}
u(x,t) = e^{-x}\cdot  W_{\beta,\nu}(t^\beta).
\end{equation}
\end{prop}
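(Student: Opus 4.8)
The plan is to verify the claimed solution by substituting the separated ansatz $u(x,t) = e^{-x} \cdot W_{\beta,\nu}(t^\beta)$ directly into the nonlinear equation and showing that the time-dependent and space-dependent parts decouple in just the right way. The key observation motivating the whole construction is that the exponential factor $e^{-x}$ satisfies $\partial_x e^{-x} = -e^{-x}$, so the nonlinearity will collapse to something proportional to $u^2$. First I would write $g(t) := W_{\beta,\nu}(t^\beta)$ and compute each term of the equation on the ansatz. For the nonlinear transport term, $u\,\partial_x u = (e^{-x} g)(-e^{-x} g) = -e^{-2x} g^2 = -u^2$; this exactly cancels the $-u^2$ on the right-hand side. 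So after this cancellation the equation reduces to the purely temporal identity $t^{1-\nu}\,\partial_t^\beta\bigl(t^\nu \partial_t(e^{-x} g)\bigr) = -\beta\, e^{-x} g$, in which the common factor $e^{-x}$ can be divided out.

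What remains is to check the time-only equation $t^{1-\nu} \frac{d^\beta}{dt^\beta}\bigl(t^\nu \frac{dg}{dt}\bigr) = -\beta\, g(t)$. Here I would invoke Theorem 3.1, but with attention to a sign subtlety: Theorem 3.1 establishes $\frac{d^\beta}{dt^\beta}\bigl(t^\nu \frac{df}{dt}\bigr) = \beta\, t^{\nu-1} f$ for $f(t) = W_{\beta,\nu}(t^\beta)$, which gives a $+\beta$ on the right, whereas the proposition seemingly requires $-\beta$. I expect this to be the main obstacle, and I would resolve it by recomputing the left-hand side of the proposition's temporal part term-by-term, exactly as in the proof of Theorem 3.1, to confirm the correct sign. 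Concretely, multiplying the Theorem 3.1 identity through by $t^{1-\nu}$ yields $t^{1-\nu}\frac{d^\beta}{dt^\beta}\bigl(t^\nu g'\bigr) = \beta\, t^{1-\nu}\cdot t^{\nu-1} g = \beta\, g$, so the temporal operator produces $+\beta\,g$, not $-\beta\,g$. This indicates that either the sign on the $-\beta u$ term or the sign convention in the exponential must be reconciled; I would trace through the series computation once more to pin down the consistent signs.

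Assuming the sign bookkeeping works out as the authors intend, the proof structure is then a clean three-line verification: compute $\partial_x u$ and the transport term to cancel $-u^2$, apply (the $t^{1-\nu}$-rescaled form of) Theorem 3.1 to handle the fractional temporal operator, and match the remaining $\beta$-term. The computation relies only on the linearity of the Caputo derivative in $t$ acting on the spatially-constant-in-$t$ factor, together with the power-law rule $\frac{d^\beta}{dt^\beta} t^s = \frac{\Gamma(s+1)}{\Gamma(s+1-\beta)} t^{s-\beta}$ already recorded in the excerpt. No convergence issues arise beyond those already handled for the Wright series, since $W_{\beta,\nu}$ is entire and the manipulations are term-by-term on an absolutely convergent series. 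The essential content is therefore the generalized-separation-of-variables insight that the quadratic nonlinearity is precisely tuned to be absorbed by the $-u^2$ source term, leaving a linear fractional ODE already solved by Theorem 3.1.
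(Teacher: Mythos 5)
Your approach is exactly the paper's: separate variables as $u = e^{-x} f(t)$, use $u\,\partial_x u = -u^2$ to absorb the quadratic source term, divide out $e^{-x}$, and reduce to the fractional ODE
\begin{equation*}
\frac{d^\beta}{dt^\beta}\left(t^{\nu}\frac{df}{dt}\right) = -\beta\, t^{\nu-1} f(t).
\end{equation*}
The sign obstruction you flagged is genuine, and your diagnosis is correct: $f(t)=W_{\beta,\nu}(t^\beta)$ cannot satisfy this reduced equation, since by Theorem 3.1 it produces $+\beta\, t^{\nu-1} f$. What you left conditional (``assuming the sign bookkeeping works out as the authors intend'') is in fact a misprint in the statement itself: the paper's own proof concludes with $f(t) = W_{\beta,\nu}(-t^{\beta})$, so the proposition's displayed solution should read $u(x,t) = e^{-x}\, W_{\beta,\nu}(-t^{\beta})$. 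Note also that the alternative reconciliation you mention --- flipping the sign convention in the exponential --- is unavailable: with $u = e^{+x} f(t)$ the transport term gives $+u^2$, which no longer cancels the $-u^2$ source, so the exponential is forced to be $e^{-x}$ and the correction must land on the argument of the Wright function.

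To close the loop where your proposal stops, run the Theorem 3.1 computation on the alternating series. With $f(t) = W_{\beta,\nu}(-t^{\beta}) = \sum_{k=0}^{\infty} \frac{(-1)^k t^{k\beta}}{k!\,\Gamma(k\beta+\nu)}$, the same term-by-term use of $\frac{d^\beta}{dt^\beta}t^s = \frac{\Gamma(s+1)}{\Gamma(s+1-\beta)}t^{s-\beta}$ gives
\begin{equation*}
\frac{d^\beta}{dt^\beta}\left(t^{\nu}\frac{df}{dt}\right)
= \beta \sum_{k=1}^{\infty} \frac{(-1)^k\, t^{(k-1)\beta+\nu-1}}{(k-1)!\,\Gamma\left((k-1)\beta+\nu\right)}
= -\beta\, t^{\nu-1} f(t),
\end{equation*}
the crucial extra factor $(-1)$ surviving the index shift $k \mapsto k-1$; this is precisely the $-\beta$ your reduced equation requires. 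With that substitution your three-line verification is complete and coincides with the paper's argument, which applies the sign-flipped variant of Theorem 3.1 without comment (and, strictly speaking, proves the corrected statement rather than the printed one).
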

\begin{proof}
We search a particular solution by separating variable form $u(x,t) = f(t)e^{-x}$.
By substitution, we have that
\begin{equation}
e^{-x}t^{1-\nu}\frac{\partial^\beta}{\partial t^\beta}\left(t^{\nu}\frac{\partial f}{\partial t}\right)-f^2 e^{-2x}= -f^2e^{-2x}-\beta f e^{-x}
\end{equation}
and therefore to the following fractional ODE on f(t)
\begin{equation}\label{so}
\frac{d^\beta}{d t^\beta}t^{\nu}\frac{d f}{d t}= -\beta t^{\nu-1} f(t). 
\end{equation}
Under the condition that $f(t = 0) = 1$ the solution for \eqref{so} is given by
$$f(t)= W_{\beta, \nu}(-t^\beta)$$
and we obtain the claimed result.
\end{proof}

\begin{os}
Observe that, if we consider the case $\beta= \nu = 1$, we obtain an explicit solution for
the equation 
\begin{equation}
\frac{\partial}{\partial t}t\frac{\partial u}{\partial t}+u\frac{\partial u}{\partial x}= -u^2- u,
\end{equation}
that is a sort of nonlinear hyperbolic equation with an advective term and a nonlinear reaction term, admitting the following particular solution
\begin{equation}
u(x,t) = C_0(-t)\cdot e^{-x}.
\end{equation}
\end{os}

 \begin{prop}
    The equation 
    \begin{equation}\label{nld3}
        t^{1-\lambda}\frac{\partial}{\partial t}t^\lambda\frac{\partial^\lambda u}{\partial t^\lambda} = \frac{\partial^2 u^m}{\partial x^2}-u, \quad m>0,
        \end{equation}
     admits a solution of the form 
     \begin{equation}
     u(x, t) =W_{\lambda,1}(-t^\lambda) \, x^{1/m}.
     \end{equation}
    \end{prop}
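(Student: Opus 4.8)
The plan is to use the generalized separation of variables already employed in the preceding proposition, seeking a product solution $u(x,t)=f(t)\,g(x)$. The decisive structural feature of \eqref{nld3} is that the nonlinearity enters only through $u^m=f(t)^m\,g(x)^m$, the spatial operator $\partial^2/\partial x^2$ acts on $g(x)^m$ alone, and the temporal operator $t^{1-\lambda}\frac{\partial}{\partial t}t^\lambda\frac{\partial^\lambda}{\partial t^\lambda}$ acts on $f(t)$ alone. Substituting the ansatz will therefore split the equation into a purely spatial factor and a purely temporal factor, provided the two can be made to match after cancellation.

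First I would exploit the freedom in $g$ to annihilate the nonlinear contribution. Choosing $g(x)=x^{1/m}$ gives $g(x)^m=x$, so that $\frac{\partial^2}{\partial x^2}u^m=f(t)^m\,\frac{d^2}{dx^2}x=0$. With this choice the entire right-hand side collapses to $-u=-f(t)\,x^{1/m}$, while the left-hand side equals $x^{1/m}$ times the temporal operator applied to $f$. Cancelling the common factor $x^{1/m}$ reduces the nonlinear PDE to the linear fractional ODE $t^{1-\lambda}\frac{d}{dt}t^\lambda\frac{d^\lambda f}{dt^\lambda}=-f$ for the time profile.

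The resulting temporal equation is, up to the multiplicative constant carried by the operator, the negative-argument analogue of the Bessel-type equation \eqref{eq:1} from the Introduction. Since $W_{\lambda,1}(t^\lambda)$ solves \eqref{eq:1}, a termwise sign flip in the defining series makes $f(t)=W_{\lambda,1}(-t^\lambda)$, normalized by $f(0)=1$, the natural candidate; substituting it back into $u=f\,g$ then yields the claimed solution $u(x,t)=W_{\lambda,1}(-t^\lambda)\,x^{1/m}$.

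I expect the main obstacle to be the careful termwise verification of this temporal ODE, carried out exactly as in the proof of Theorem 3.1: one applies $\frac{d^\lambda}{dt^\lambda}t^s=\frac{\Gamma(s+1)}{\Gamma(s+1-\lambda)}t^{s-\lambda}$ to the series $\sum_{k\ge 0}\frac{(-1)^k t^{\lambda k}}{k!\,\Gamma(\lambda k+1)}$ (the constant $k=0$ term being annihilated by the Caputo derivative), then multiplies by $t^\lambda$, differentiates, multiplies by $t^{1-\lambda}$, and shifts the summation index $k\mapsto k-1$ to recover a multiple of the original series. The delicate point is the bookkeeping of the multiplicative constant produced by this telescoping and confirming that it is compatible with the coefficient of the reaction term $-u$; this matching between the order $\lambda$ of the temporal operator and the linear term is the crux on which the reduction hinges.
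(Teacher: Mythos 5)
Your outline reproduces the paper's proof step for step: the same ansatz $u(x,t)=x^{1/m}f(t)$, the same observation that $u^m=f^m\,x$ makes $\partial^2 u^m/\partial x^2$ vanish so the nonlinearity is annihilated, and the same reduction to the temporal equation $t^{1-\lambda}\frac{d}{dt}t^\lambda\frac{d^\lambda f}{dt^\lambda}=-f$, after which the paper — exactly like you — simply asserts that $f(t)=W_{\lambda,1}(-t^\lambda)$ solves it, without carrying out the termwise check.

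The difficulty is that the ``delicate point'' you defer to the end is not mere bookkeeping: if you actually perform the verification you describe, it fails by a factor of $\lambda$. Indeed, applying $\frac{d^\lambda}{dt^\lambda}t^{\lambda k}=\frac{\Gamma(\lambda k+1)}{\Gamma(\lambda k+1-\lambda)}t^{\lambda k-\lambda}$ termwise (the $k=0$ term dying under the Caputo derivative) and then multiplying by $t^\lambda$, differentiating, and multiplying by $t^{1-\lambda}$ gives
\begin{equation*}
t^{1-\lambda}\frac{d}{dt}\,t^\lambda\frac{d^\lambda}{dt^\lambda}\,W_{\lambda,1}(-t^\lambda)
= t^{1-\lambda}\frac{d}{dt}\sum_{k=1}^\infty \frac{(-1)^k\, t^{\lambda k}}{k!\,\Gamma(\lambda(k-1)+1)}
= \lambda \sum_{k=1}^\infty \frac{(-1)^k\, t^{\lambda (k-1)}}{(k-1)!\,\Gamma(\lambda(k-1)+1)}
= -\lambda\, W_{\lambda,1}(-t^\lambda),
\end{equation*}
because $\frac{d}{dt}t^{\lambda k}=\lambda k\,t^{\lambda k-1}$ contributes a surviving factor $\lambda$ alongside the $k$ that turns $k!$ into $(k-1)!$. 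So $W_{\lambda,1}(-t^\lambda)$ satisfies $t^{1-\lambda}\frac{d}{dt}t^\lambda\frac{d^\lambda f}{dt^\lambda}=-\lambda f$, not $=-f$ — entirely consistent with Eq.~\eqref{eq:1} of the Introduction and with Theorem~3.1 and Proposition~1, where the factors $\lambda$ and $\beta$ appear explicitly on the right-hand side. Consequently the claimed $u$ satisfies \eqref{nld3} as written only when $\lambda=1$; for general $\lambda\in(0,1)$ the reaction term would need to be $-\lambda u$. Your instinct that the constant-matching is ``the crux'' was exactly right, but your proposal leaves that step unverified, and once verified it does not close for the stated equation — a defect your attempt shares with the paper's own proof, which glosses over precisely the same point.
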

     \begin{proof}
     Let us assume that the equation admits a solution, via separation of variables, of the form $x^{1/m}f(t)$, then if we plug in this \textit{ansatz} we get
     \begin{equation}
    \left(t^{1-\lambda}\frac{\partial}{\partial t}t^\lambda\frac{\partial^\lambda}{\partial t^\lambda}\right) x^{1/m} f(t)= -x^{1/m}f(t),
     \end{equation}
     whose solution is given by 
     \begin{equation}
     \nonumber f(t) = W_{\lambda,1}(-t^\lambda),
     \end{equation}
     as claimed.
     \end{proof}

We have previously discussed applications to equations involving fractional Bessel-type operator in time, another interesting case is the applications to linear space-fractional diffusive-type equations involving derivatives in the sense of Caputo. We have the following result.

\begin{prop}
The fractional equation 
\begin{equation}
\frac{\partial u}{\partial t} = \frac{1}{\beta \ x^{\nu-1}}\frac{\partial^\beta}{\partial x^\beta}\left(x^{\nu}\frac{\partial u}{\partial x}\right), \quad x \geq 0, \beta \in(0,1),
\end{equation}
admits a solution of the form
\begin{equation}
u(x,t)= e^{-t}\cdot W_{\beta, \nu}(-x^\beta).
\end{equation}
\end{prop}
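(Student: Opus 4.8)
The plan is to solve the equation by generalized separation of variables, exactly in the spirit of the proofs of Propositions 1 and 2. I would look for a product solution $u(x,t)=T(t)\,X(x)$ and substitute it into the equation. Since the fractional operator $\partial^\beta/\partial x^\beta$ and the ordinary derivative $\partial/\partial x$ both act only in the spatial variable, the time factor $T(t)$ passes through them unchanged by linearity, while the time derivative on the left acts only on $T(t)$. This turns the PDE into
$$T'(t)\,X(x) = T(t)\,\frac{1}{\beta\,x^{\nu-1}}\frac{d^\beta}{dx^\beta}\left(x^\nu X'(x)\right).$$
Dividing by $T(t)X(x)$ separates the variables, so the left side (a function of $t$ alone) and the right side (a function of $x$ alone) must equal a common separation constant.

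Next I would fix that constant to be $-1$. This choice gives $T'(t)=-T(t)$, hence $T(t)=e^{-t}$ up to a multiplicative constant that can be absorbed into $X$, which is precisely the exponential time factor appearing in the claimed solution. With the constant equal to $-1$, the spatial factor must satisfy
$$\frac{d^\beta}{dx^\beta}\left(x^\nu X'(x)\right) = -\beta\,x^{\nu-1}X(x).$$

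Now I would observe that this is exactly equation \eqref{so} from the proof of Proposition 1, with the roles of $t$ and $x$ interchanged, i.e. the sign-modified form of Theorem 3.1. By that result, the solution (under the normalization that selects this particular branch, as in Proposition 1) is $X(x)=W_{\beta,\nu}(-x^\beta)$; concretely, differentiating the series for $W_{\beta,\nu}(-x^\beta)$ term by term, forming $x^\nu X'(x)$, and applying the power rule $\frac{d^\beta}{dx^\beta}x^s=\frac{\Gamma(s+1)}{\Gamma(s+1-\beta)}x^{s-\beta}$ reproduces the right-hand side after a shift of the summation index, just as in the computation behind Theorem 3.1. Combining the two factors yields $u(x,t)=e^{-t}\,W_{\beta,\nu}(-x^\beta)$, as asserted.

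I do not expect any serious obstacle: once the separation constant is chosen correctly, the whole argument is a direct verification resting on Theorem 3.1 and its negative-argument variant already established for Proposition 1. The only points that deserve a line of care are the commutation of the Caputo derivative in $x$ with the constant-in-$x$ factor $e^{-t}$, and the legitimacy of term-by-term fractional differentiation of the Wright series; both are justified by the uniform convergence of the Wright series on compact sets, so the manipulation is rigorous rather than merely formal.
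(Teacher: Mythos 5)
Your proposal is correct and matches the paper's intent: the paper omits the proof, stating only that it ``can be directly obtained by substitution,'' and your separation-of-variables reduction followed by the term-by-term verification of $\frac{d^\beta}{dx^\beta}\left(x^\nu X'(x)\right) = -\beta x^{\nu-1}X(x)$ for $X(x)=W_{\beta,\nu}(-x^\beta)$ is precisely that substitution, reusing the computation behind Theorem 3.1 just as Proposition 1 does. You in fact supply more detail than the paper itself, so no gap.
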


We neglect the complete proof that can be directly obtained by substitution.

\section{Conclusions}

The main aim of this paper is to underline a new interesting application of Wright functions of the first kind to solve fractional ordinary differential equations with variable coefficients that generalize Bessel-type equations. In practice the solutions are formal because obtained by the method of substition, showing the direct connection between fractional Bessel-type equations and Wright functions.
Then, we discuss the applications of this new result to solve linear and nonlinear fractional partial differential equations admitting solutions obtained by means of the generalized separation of variable method. As a consequence we are able to find new exact solutions for time or space-fractional linear and nonlinear diffusive-type equations with variable coefficients.\\
A more general classification of linear or nonlinear fractional equations admitting solutions that can be obtained by using the results here discussed should be object of further research. 
Moreover, the possible applications of these fractional Bessel-type equations must be deepened. 

\bigskip

\textbf{Acknowledgments:} The work of the authors has been carried out in the framework of the activities of the National Group
for Mathematical Physics (GNFM).

\end{document}